\renewcommand{\cite}{\citet}
\makeatletter \@addtoreset{equation}{section} \makeatother
\renewcommand\thefigure{\thesection.\@arabic\c@figure}
\renewcommand\thetable{\thesection.\@arabic\c@table}
\theoremstyle{plain}
\newtheorem{theorem}{Theorem}[section]                                          
\newtheorem{proposition}[theorem]{Proposition}                          
\newtheorem{lemma}[theorem]{Lemma}
\newtheorem{corollary}[theorem]{Corollary}
\theoremstyle{definition}
\theoremstyle{remark}
\newtheorem{remark}[theorem]{Remark}
\newtheorem{example}[theorem]{Example}
\definecolor{my-blue}{rgb}{0.0,0.0,0.6}
\definecolor{my-red}{rgb}{0.5,0.0,0.0}
\definecolor{my-green}{rgb}{0.0,0.5,0.0}
\numberwithin{equation}{section}
\newcommand*{\R}{{\mathbb R}}
\newcommand*{\Z}{{\mathbb Z}}
\renewcommand*{\P}{{\mathbb P}}
\newcommand*{\E}{{\mathbb E}}
\newcommand*{\N}{{\mathbb N}}
\newcommand*{\w}{\omega}
\newcommand*{\kS}{{\mathfrak S}}
\newcommand*{\e}{\varepsilon}
\newcommand*{\cM}{{\mathcal M}}
\newcommand*{\one}{{{\rm 1\mkern-1.5mu}\!{\rm I}}}
\newcommand*{\Xtil}{{\widetilde X}}
\newcommand*{\Ybar}{{\bar Y}}
\newcommand*{\Xhat}{{\widehat X}}
\newcommand*{\Pbar}{{\bar P}}
\newcommand*{\Ebar}{{\bar E}}
\newcommand*{\nn}{\nonumber}
\newcommand*{\PMC}{{\mathrm P}}
\newcommand*{\EMC}{{\mathrm E}}
\begin{document}

\date{March 22, 2010, revised August 28, 2010, accepted December 10, 2010}
\keywords{RWRE, random walk, random environment, central limit theorem, invariance principle, point of view of the particle}
\subjclass{60K37, 60F17, 60J05, 82D30}

\author{Mathew Joseph}
\address{M.\ Joseph, Department of Mathematics, University of Utah,
155 South 1400 East, Salt Lake City, UT 84112-0090, USA.}
\email{joseph@math.utah.edu}
\urladdr{http://www.math.utah.edu/$\sim$joseph}
\author{Firas Rassoul-Agha}
\address{F.\ Rassoul-Agha, Department of Mathematics, University of Utah,
155 South 1400 East, Salt Lake City, UT 84112-0090, USA.}
\email{firas@math.utah.edu}
\urladdr{http://www.math.utah.edu/$\sim$firas}
\thanks{Work supported in part by NSF Grant DMS-0747758.}

\title[Almost Sure Invariance Principle for RWRE]{Almost Sure Invariance Principle for Continuous-Space Random Walk 
in Dynamic Random Environment} 

\begin{abstract}
We consider a random walk on $\R^d$ in a polynomially mixing random environment that is refreshed at each time step.
We use a martingale approach to give a necessary and sufficient condition for the almost-sure functional central limit theorem to hold.
\end{abstract}

\maketitle

\section{Introduction and main result}
\label{sec1}
Random Walk in Random Environment (RWRE) is by now a standard model of motion in disordered media. 
Put simply, a RWRE is a Markov chain on a particular space where the transition probabilities are chosen through a random experiment. In other words, we first randomly choose all the transition probabilities to get a \textit{random environment} and then we have a \textit{random walk} governed by this random environment. For the case when the walk happens on the integer lattice,  
\cite{bolt-szni-dmv}
and 
\cite{zeit-stflour} give an excellent overview.
Of course, one can use $\R^d$ in place of $\Z^d$. 
One then must account for some mixing in the environment.  In this paper, we consider the special case where the environment 
is ``refreshed'' each time step, and thus  the underlying space will in fact be $\Z\times\R^d\subset\R^{d+1}$ where $\Z$ represents time and $\R^d$ represents
space. ($\R$ is the set of real numbers, $\Z$ the integers, $\Z_+$ the nonnegative integers, and $\N$ the positive integers.)
Let us now describe our model in more detail.

The environment space is $\Omega=\big(\mathcal{M}_1(\R^d)\big)^{\Z\times\R^d}$ where $\mathcal{M}_1(\R^d)$ is the space of probability measures on $\R^d$. 
An {\sl environment} $\w$ in $\Omega$ is of the form $\w= \big(\w_{n,x}\big)_{n\in \Z, x\in \R^d}$,
where $n\in\Z$ denotes the discrete temporal coordinate and $x\in\R^d$ the continuous spatial coordinate. 
$\w_{n,x}$ is a probability measure on $\R^d$ and represents the jump probability, at time $n$, from point $x$ to a new location in $\R^d$. Denote by $\w_{n,\cdot}=\big(\w_{n,x}\big)_{x\in \R^d}$, the component of $\w$ on the time coordinate (or level) $n$.   Given an environment $\w \in \Omega$, a time $m\in \Z$, and a location $y\in \R^d$  the probability measure $P_{m,y}^{\w}$ defines a Markov chain $(Z_n)_{n\ge0}$  on $\Z \times\R^d$ as follows
\begin{align*}
&P_{m,y}^{\w}\big\{Z_0=(m,y)\big\}=1\text{ and }\\ 
&P_{m,y}^{\w}\big\{Z_{n+1}\in \{m+n+1\}\times A \,\big\vert\, Z_n=(m+n,x)\big\}= \w_{m+n,x}(A-x).
\end{align*}
$(Z_n)_{n\ge0}$ is called a random walk in environment $\w$
and $P_{m,y}^{\w}$ is called the {\sl quenched measure}.
We will be interested in random walks which start at time $0$. In this case $Z_n=(n,X_n)$ and we can just look at the evolution of $X_n$.  For simplicity of notation we will abbreviate $P_x^{\w}$ for $P_{0,x}^{\w}$. We equip $\mathcal{M}_1(\R^d)$ with the topology of weak convergence and the corresponding Borel $\sigma$-algebra.
Then equip $\Omega$ with the product $\sigma$-algebra $\kS$. 
We are given a probability measure
$\P$ on $(\Omega,\kS)$ which is stationary and ergodic under the shifts $T^{m,y}\w=(\w_{m+n,y+x})_{n\in\Z,x\in\R^d}$. $P_x=\int P_x^{\w}\,\P(d\w)$ is then called the {\sl joint measure} and its marginal on the sequence space $\big(\Z \times\R^d\big)^{\Z_+}$ is called the {\sl averaged measure} and is still denoted by $P_x$. 
Denote the expectations corresponding to $\P,P_x^{\w}, P_x$, etc, by $\E,E_x^{\w},E_x$, etc. 
Let $\kS_n$ be the $\sigma$-algebra generated by $\w_{n,\cdot}$ and let $\kS_n^+$ be
the $\sigma$-algebra generated by $(\w_{m,\cdot})_{m\ge n}$.
$C$ will denote a chameleon constant which might change value from term to term.

Before we state our assumptions let us note that the case of RWRE on $\mathbb Z^d$ is recovered from our model by letting $\w_{0,0}$ be supported on 
$\mathbb Z^d$ and then
setting $\w_{n,x}=\w_{n,[U+x]}$, where $[y]$ means we take the integral part of each of the coordinates of $y$ and $U$ is a random variable independent of $\w$
and uniformly distributed on the cube $[0,1)^d$.

We are now ready to state the assumptions on the environment measure $\P$.

\newtheorem*{A1}{\sc Assumption A1}
\begin{A1}
Time components $\big(\w_{n,\cdot}\big)_{n\in \Z}$ are i.i.d.\ under $\P$.
\end{A1}

Assumption A1 means $X_n$ is a random walk on $\R^d$ in a random environment that gets refreshed at each time step.
With A1 assumed, the law of $X_n$ under $P_0$ becomes that  of a classical random walk on $\R^d$ with jump probability $p(A)=P_0\{X_1\in A\}$. 
Thus, for example, the law of large numbers (LLN) holds if, and only if, one has $E_0[|X_1|]<\infty$. The limiting velocity then equals
\begin{equation} \label{v} v=  E_0[X_1]=\E\Big[\int\!x\, \w_{0,0}(dx)\Big].\end{equation}

We are interested in the central limit theorem (CLT), hence the next assumption.

\newtheorem*{A2}{\sc Assumption A2}
\begin{A2}
The one step jump has a finite second moment:
\begin{equation} \label{mom} E_0[|X_1|^2]=\E\Big[\int\! \vert x\vert^2\, \w_{0,0}(dx)\Big]<\infty. \end{equation}
\end{A2}

For $\e>0$ define the process 
\begin{equation}\label{bn}B_\e(t)=\sqrt{\e}(X_{[t/\e]}-[t/\e]v)\text{ for }t\ge 0.\end{equation}
Donsker's invariance principle says that the law of $B_\e$ under $P_0$ converges weakly, as $\e\to0$, to the law
of a Brownian motion with covariance matrix
\begin{equation}\label{D} \mathcal{D}= E_0[(X_1-v)(X_1-v)^T]=\E\Big[\int (x-v)(x-v)^T\, \w_{0,0}(dx)\Big].\end{equation}
Here, $A^T$ is the transpose of the matrix $A$ and a vector $a\in\R^d$ is thought of as a matrix with one column and $d$ rows.
(A Brownian motion with covariance matrix $\mathcal D$ has the same law as $\Gamma W$ with $\mathcal D=\Gamma\Gamma^T$ and $W$ a standard $d$-dimensional Brownian motion.)

We are interested in the situation where the invariance principle also holds for the laws of $B_\e$ under $P_0^\w$ for $\P$-a.e.\ $\w$. This is called the {\sl quenched
invariance principle}.
To motivate our next assumption we consider an example. Denote the local drift by
\begin{equation}\label{drift}D(\w)=E_0^\w[X_1]=\int\! x\,\w_{0,0}(dx).\end{equation}
Observe that $D(\w)$ only depends on $\w_{0,0}$.

\begin{example}\label{example}
Let $\P$ be such that $(\w_{n,0})_{n\in\Z}$ is a stationary ergodic sequence\ (valued in $\cM_1(\R^d)$) and for each $n\in\Z$ and $x\in\R^d$ 
$\w_{n,x}=\w_{n,0}$. 
Then, $E_0^\w[X_{k+1}-X_k\,|\,X_k]=D(T^{k,X_k}\w)=D(T^{k,0}\w)$. Thus, $E_0^\w[X_{k+1}-X_k]=D(T^{k,0}\w)$ and $X_n-E_0^\w[X_n]$ is 
a $P_0^\w$-martingale relative to the filtration $\sigma\{X_1,\cdot,X_n\}$. It is easy to check that the conditions for the martingale invariance principle are  satisfied;
see for example Theorem 3 of 
\cite{rass-sepp-05}.
The conclusion is that for $\P$-a.e.\ $\w$ the law of   
\begin{equation}\label{tbn} \widetilde{B}_\e(t)=\sqrt{\e}(X_{[t/\e]}-E_0^{\w}[X_{[t/\e]}]),\ t\ge 0,\end{equation}
under $P_0^\w$ converges weakly to a Brownian motion with a covariance matrix that is independent of $\w$.
On the other hand, if $(\w_{n,0})_{n\in\Z}$ is mixing enough (in particular, when A1 holds), then $E_0^\w[X_n]-nv=\sum_{k=0}^{n-1}(D(T^{k,0}\w)-v)$ satisfies its own invariance 
principle. Thus, the laws of  $(X_n-nv)/\sqrt n$ under $P_0^\w$  are not tight.
\end{example}

The above example shows that in order for the quenched invariance principle for $B_\e$ to hold one needs to assume some spatial mixing on the environment.

\newtheorem*{A3}{\sc Assumption A3}
\begin{A3}
There exists $p>26$ and a constant $C>0$ such that for all measurable $A,B\subset \mathcal{M}_1(\R^d)$, we have
\begin{equation} 
\begin{split}
&\Big\vert \P\{\w_{0,0}\in A,\w_{0,x}\in B\}-\P\{\w_{0,0}\in A\}\P\{\w_{0,x}\in B\}\Big \vert\\
&\qquad\qquad  \le \frac{C}{\vert x\vert^p}\P\{\w_{0,0}\in A\}\P\{\w_{0,x}\in B\}.
\end{split}\label{mix}\end{equation}
\end{A3}

\begin{remark}
The bound $p>26$ is established from the bounds in Proposition \ref{the proposition} below. 
It is not optimal and can be improved by more work with the same ideas.
\end{remark}

By a standard approximation argument it follows from A3 that if $f$ and $h$ are two 
nonnegative functions in $L^2(\P)$ that are $\sigma\{\w_{0,0}\}$-measurable, then 
 \begin{equation} 
 \label{mix2}
 \big\vert \E[f(\w)h(T^{0,x}\w)]-\E[f]\E[h]\big\vert \le\frac{C}{|x|^p} \E[f]\E[h]. \end{equation}

Our last assumption concerns the regularity of the environment. Let $\delta_z$ denote the pointmass at $z$.

\newtheorem*{A4}{\sc Assumption A4}
\begin{A4}
$\P$ satisfies the following:
\begin{equation}  \P\{\exists z:\w_{0,0}=\delta_z\}<1.\label{regular}\end{equation}
\end{A4}

Say A4 fails to hold. Let \[A=\{\mu\in \cM_1(\R^d):\exists z=z(\mu)\in\R^d\text{ such that }\mu=\delta_z\}.\] 
If $\alpha_{n,x}$ denotes the marginal of $\P$ on $\sigma(\w_{n,x})$, then $\alpha_{n,x}(A)=1$ for any fixed $n\in\Z$ and $x\in\R$.
By independence of $\w_{1,\cdot}$ and $\kS_0$ and the disintegration lemma we have
\begin{align*}
\P\big\{\exists(z_1,z_2):\w_{0,0}=\delta_{z_1},\w_{1,z_1}=z_2\big\}&=\int\!\one_A(\mu)\,\alpha_{1,z(\mu)}(A)\,\alpha_{0,0}(d\mu)\\
&=\alpha_{0,0}(A)=1.
\end{align*}
This implies that given the environment $\w$, the walk $(X_n)_{n\ge0}$ is nonrandom under $P_0^\w$. In this case, there are no fluctuations in the quenched
walk and the invariance principle fails to hold unless $\w_{0,0}$ is also nonrandom under $\P$, in which case the invariance principle is degenerate with a 
vanishing covariance matrix. 

\begin{remark}
It is noteworthy that when A4 fails to hold the situation, even though degenerate, is similar to the one in Example \ref{example}.
Indeed, $X_n-E_0^\w[X_n]=0$ and thus a degenerate quenched invariance principle holds for $\widetilde B_\e\equiv0$. Moreover, 
$E_0^\w[X_n]-nv=X_n-nv$ and thus an invariance principle holds for the processes $\{\sqrt{\e}(E_0^\w[X_{[t/\e]}]-[t/\e]v):t\ge0\}$. 
\end{remark}

We can now formulate the main theorem of this paper.

\begin{theorem} \label{thm}Assume the environment measure $\P$ is shift invariant and  satisfies the independence assumption A1 and the mixing assumption A3.  Then a quenched invariance principle holds if and only if the moment assumption A2 and the regularity assumption A4 are satisfied. That is, for $\P$-a.e.\ $\w$ the distribution of $B_\e$ induced by $P_0^\w$ converges weakly to the distribution of a Brownian motion with covariance matrix $\mathcal{D}$ given by \eqref{D}. Moreover, $n^{-1/2}\max_{k\le n}\vert E_0^{\w}[X_k]-kv \vert $ converges to $0$ $\P$-a.s.\ and the same invariance principle holds for the distribution of $\widetilde B_\e$ induced by $P_0^\w$.
\end{theorem}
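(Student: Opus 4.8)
The plan is to use the point of view of the particle, reducing the quenched invariance principle to an additive-martingale problem plus a correction term that we show is negligible. First, I would show the forward direction: if the quenched invariance principle holds, then A2 and A4 must hold. Necessity of A2 is clear, since the averaged process $B_\e$ under $P_0=\int P_0^\w\,\P(d\w)$ is the rescaled classical random walk with jump law $p$, which is tight only if $E_0[|X_1|^2]<\infty$; and necessity of A4 is exactly the degeneracy computation in the excerpt, where failure of A4 forces the quenched walk to be deterministic and hence the only possible quenched limit is degenerate, which contradicts the nondegenerate covariance $\mathcal{D}$ (except in the trivial case already excluded by A4's formulation together with nondegeneracy). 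The substance is the converse.

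For the converse, assume A1--A4. The first key step is to introduce the environment process $\bar\w_n = T^{n,X_n}\w$ on $\Omega$, which under $P_0^\w$ is a Markov chain; by A1 its transition operator is $(\Pi f)(\w)=\int f(T^{1,y}\w)\,\w_{0,0}(dy)$ averaged against the fresh time slice, and one checks $\P$ is invariant and ergodic for this chain (ergodicity comes from A1 together with the mixing in A3). The second key step is the corrector/martingale decomposition: writing $g(\w)=D(\w)-v$, I want to solve (in an $L^2$ or resolvent-approximation sense) the Poisson-type equation for $\sum_{k=0}^{n-1} g(\bar\w_k)$, i.e.\ find $h$ with $h-\Pi h = g$, so that
\begin{equation*}
X_n - nv = \underbrace{\big(X_n - E_0^\w[X_n]\big)}_{\text{quenched martingale}} + \underbrace{\sum_{k=0}^{n-1} g(\bar\w_k)}_{\text{additive functional}},
\end{equation*}
and the additive functional is turned into a martingale plus a telescoping boundary term $h(\bar\w_n)-h(\bar\w_0)$. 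The martingale invariance principle (e.g.\ Theorem 3 of \cite{rass-sepp-05}, already invoked in the excerpt) handles both martingale pieces, giving the Brownian limit with covariance $\mathcal{D}$ once we verify that the corrector contributions do not alter the covariance; the clean way is to show $h$ exists as a genuine $L^2(\P)$ function (not merely as a resolvent limit), which is where the polynomial mixing rate $p>26$ enters.

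The main obstacle is precisely controlling the corrector. Under A3 the random variables $g(\bar\w_k)$ are not a martingale difference sequence and only polynomially mixing in the spatial variable, and the spatial displacement is itself random, so the usual stationary-ergodic corrector construction is not automatic; one must quantify correlations $\E[g(\w)g(T^{0,x}\w)]$ via \eqref{mix2}, push them through the random-walk displacement (using the quenched CLT-type local bounds on $X_k$ and a chaining/coupling argument over the $\R^d$ lattice of scales), and sum them — this is the computation the remark attributes to ``Proposition \ref{the proposition} below'' and the reason $p$ must exceed a threshold near $26$. Once $h\in L^2(\P)$ is established, the boundary term satisfies $n^{-1/2}\max_{k\le n}|h(\bar\w_k)|\to 0$ $\P$-a.s.\ by a Borel--Cantelli argument along the chain, which simultaneously yields $n^{-1/2}\max_{k\le n}|E_0^\w[X_k]-kv|\to0$ (since that maximum is controlled by the additive functional's boundary term after subtracting the quenched-mean martingale, which has zero quenched mean) and shows $B_\e$ and $\widetilde B_\e$ have the same quenched limit. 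I would finish by assembling the functional (not just one-time) statements: tightness of $B_\e$ under $P_0^\w$ follows from the martingale maximal inequality plus the negligibility of the corrector, and convergence of finite-dimensional distributions follows from the martingale FCLT applied along the chain $\bar\w_n$.
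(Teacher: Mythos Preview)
Your architecture---environment process, martingale plus corrector, control of the corrector---is the paper's, but two steps are off.

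First, $\P$ is not invariant for the chain $T^{n,X_n}\w$: after one step, level $-1$ of the shifted environment is $\w_{0,X_1+\cdot}$, which is correlated with $X_1$ through $\w_{0,0}$, so $\P_1\ne\P$ on the full $\sigma$-algebra. The paper instead constructs an honest invariant ergodic $\P_\infty$ (Theorem~\ref{Pinfty}) and only at the end transfers back to $\P$ via $\P_\infty\big|_{\kS_0^+}=\P\big|_{\kS_0^+}$. More substantively, the paper does \emph{not} solve $h-\Pi h=g$ in $L^2$: it proves only the subdiffusive variance bound $\E\big[|E_0^\w[X_n]-nv|^2\big]=\mathcal O(n^\eta)$ with $\eta<1$ (Proposition~\ref{the proposition}), which is exactly the input the Maxwell--Woodroofe/Derriennic--Lin machinery behind Theorem~\ref{thm2} requires. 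Your claim that a genuine $L^2$ corrector exists is a stronger statement that is neither proved nor needed; the threshold $p>26$ emerges from the variance computation, not from an $L^2$ bound on $h$, and it is not clear such an $h$ exists under mere polynomial mixing.

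Second, your sketch of that variance estimate (``push correlations through the random-walk displacement\ldots chaining over scales'') misses the central device. The paper rewrites
\[
\E\big[|E_0^\w[X_n]-nv|^2\big]=\sum_{k=0}^{n-1}\EMC_0[\phi(Y_k)],
\]
where $Y_k=\Xtil_k-X_k$ is the difference of two \emph{independent walks in the same environment} and $\phi(x)=\E[g(\w)\cdot g(T^{0,x}\w)]=\mathcal O(|x|^{-p})$ by A3. All the work is then to show that the Markov chain $Y$ spends little time near the origin: one compares $Y$ to the genuinely symmetric random walk $\Ybar$ (difference of walks in \emph{independent} environments), proves exit-time bounds from boxes $[-r,r]^d$ via a multiscale recursion (Proposition~\ref{a3} and the lemmas following it), and controls the number and length of excursions back to a small box. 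This two-walk argument, and in particular the regularity assumption A4 entering through the nondegeneracy of $Y_1$, is what produces the exponent bookkeeping that forces $p>26$; a single-walk local-CLT or chaining estimate does not set it up.
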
 

There are three major approaches that have  been used to prove quenched central limit theorems for RWRE, two of which were directly used to deal with the special case of the above theorem where $\w_{0,0}$ is $\P$-almost-surely supported on $\Z^d$ and $\{\w_{n,x}:n\in\Z,x\in\Z^d\}$ is an i.i.d.\ sequence; the so-called {\sl random walk in space-time product random environment}. 

One approach is via Fourier-analytic methods; see  
\cite{bold-minl-pell-04}. 
This approach requires exponential moment controls on the step of the random walk, uniformly in the environment $\w$; i.e.\ that $\sup_\w E_0^\w[e^{\lambda|X_1|}]<\infty$ for some $\lambda>0$. 
Recently, the above authors showed that their method can handle spatial mixing in the environment and proved a weaker version of Theorem \ref{thm}. Namely, \cite{bold-minl-pell-09} 
assume exponential spatial mixing (rather than polynomial, as in assumption A3), that transitions $\w_{n,x}$ have a density relative to the Lebesgue measure on $\R^d$ and, most restrictive, the assumption that transition measures $\w_{n,x}$ are small random perturbations of a nonrandom jump measure $p(y)\,dy$. 

On the other hand, 
\cite{rass-sepp-05}
consider the Markov chain $(T^{n,X_n}\w)$ of the \textit{environment as seen by the particle} and use 
\label{MC arguments}%
general Markov chains arguments 
(introduced by \cite{kipn-vara-86} for reversible Markov chains then generalized by \cite{maxw-wood-00}, \cite{rass-sepp-08},
and \cite{derr-lin-03})  
to prove  Theorem \ref{thm} for random walk in space-time product random environment.  One advantage of this approach is that it can be made to work in more general
RWRE settings; see \cite{rass-sepp-06,rass-sepp-07,rass-sepp-09}. 
The main step in this approach is a subdiffusive bound on the variance of the quenched mean (see Theorem \ref{thm2}).
The goal of the present paper is to show that this approach is quite natural and works even when (mild) spatial mixing is present in the environment, still giving a necessary and sufficient
condition for the quenched invariance principle to hold. It is noteworthy that \cite{dolg-live-09} use a similar method to prove a quenched  invariance principle 
in the case when $\w_{n,\cdot}$ forms a Gibbsian Markov chain, generalizing the independence assumption A1 but strengthening the mixing assumption A3. 

The third approach, used by \cite{berg-zeit-08}, is based on a concentration inequality (Lemma 4.1 of \cite{bolt-szni-02}) 
that shows that the quenched process is not ``too far'' from the averaged one and then appeals
to the averaged central limit theorem. Even though this has not been applied directly to the space-time case, there is no reason why it would not succeed in
providing an alternate proof of Theorem \ref{thm}. Incidentally, to prove the concentration inequality one needs the same variance bound as in the aforementioned martingale approach.

We end this introduction with the main tool in the proof of the quenched invariance principle.
For a $f\in L^1(\Omega,\P)$ define
\[ \Pi f(\w)=\int f\big(T^{1,x}\w\big)\, \w_{0,0}(dx) .\]
The operator $\Pi-I$ defines the generator of the Markov chain of the environment as seen from the particle. This is the process on $\Omega$ with transitions
\[ \tilde\pi(\w,A)=P_0^{\w}\{T^{1,X_1}\w \in A\}.\]

\begin{theorem}\label{thm2}
Let $\P_\infty\in\cM_1(\Omega)$ be stationary ergodic for the Markov chain with generator $\Pi-I$. Let $\E_\infty$ denote the corresponding expectation.
Assume $\int E_0^\w[|X_1|^2]\,\P_\infty(d\w)<\infty$. Assume  there exists an $\eta\in(0,1)$ such that 
	\begin{align}\label{bound}
	\E_\infty[|E_0^\w[X_n]-\E_\infty[E_0^\w[X_n]]|^2]={\mathcal O}(n^\eta).
	\end{align}
Then, $n^{-1/2}\max_{k\le n}\vert E_0^{\w}[X_k]-kv \vert $ converges to $0$ $\P_\infty$-a.s.\ and for $\P_\infty$-a.e.\ $\w$ both the law of $B_\e$  and that
of $\widetilde B_\e$ under $P_0^\w$ converge weakly to {\rm(}the same{\rm)} Brownian motion with a nonrandom
covariance matrix.  
\end{theorem}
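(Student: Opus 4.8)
The plan is to prove Theorem \ref{thm2} by the standard martingale / corrector method for the environment-as-seen-from-the-particle process, adapted to the present space-time setting. First I would decompose the quenched mean. Write $Y_n = E_0^\w[X_n] - nv$ and note that $Y_{n} = \sum_{k=0}^{n-1} g(T^{k,\cdot}\w\text{-chain state})$ in the natural way, i.e.\ $Y_n$ is an additive functional of the Markov chain $\w_k := T^{k,X_k}\w$ under the quenched dynamics, with increment $g(\w) = D(\w) - v$ where $D(\w) = \int x\,\w_{0,0}(dx)$ as in \eqref{drift} and $\E_\infty[g]=0$. The hypothesis \eqref{bound} says exactly that $\E_\infty[|Y_n - \E_\infty[Y_n]|^2] = \mathcal O(n^\eta)$ with $\eta<1$, which is the subdiffusivity input. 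Then I would invoke the Maxwell--Woodroofe / Kipnis--Varadhan-type criterion (as generalized in the references \cite{maxw-wood-00}, \cite{derr-lin-03}, \cite{rass-sepp-08} cited on page \pageref{MC arguments}): a mean-zero additive functional of a stationary ergodic Markov chain whose partial sums have variance growing like $\mathcal O(n^\eta)$ with $\eta<1$ admits a martingale--coboundary decomposition $Y_n = M_n + R_n$, where $M_n$ is an $L^2$ martingale (with stationary ergodic increments) under the annealed measure $\P_\infty \otimes P_0^\w$, and the remainder satisfies $n^{-1/2}\max_{k\le n}|R_k| \to 0$ in $L^2$ and, after a Borel--Cantelli / maximal-inequality argument along a subsequence plus interpolation, $\P_\infty$-a.s.

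With this decomposition in hand, the second step is to upgrade convergence to $0$ of $n^{-1/2}\max_{k\le n}|R_k|$ and of $n^{-1/2}\max_{k\le n}|M_k - (\text{something negligible})|$: but here one has to be careful — $M_n$ itself is diffusive, not $o(\sqrt n)$. The point is that $n^{-1/2}\max_{k\le n}|Y_k| \to 0$ $\P_\infty$-a.s.\ requires that the martingale part also vanish at scale $\sqrt n$; this forces $\E_\infty[(M_1)^2]$ (the quenched-mean contribution) to be zero, which one extracts from \eqref{bound}: the variance $\E_\infty[M_n^2] = n\,\E_\infty[(M_1)^2]$ grows linearly, so it must match the $\mathcal O(n^\eta)$ bound on $\mathrm{Var}(Y_n)$ up to the coboundary correction, giving $\E_\infty[(M_1)^2]=0$, hence $M_n\equiv 0$ and $Y_n = R_n$. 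Then the maximal $L^2$ bound plus a dyadic Borel--Cantelli argument yields $n^{-1/2}\max_{k\le n}|Y_k|\to 0$ $\P_\infty$-a.s., which is the first claim and also immediately gives the equivalence of the two invariance principles for $B_\e$ and $\widetilde B_\e$ (their difference is $\sqrt\e\,(E_0^\w[X_{[t/\e]}] - [t/\e]v)$, which by the maximal bound converges to $0$ uniformly on compacts).

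The third step is the quenched invariance principle for $\widetilde B_\e(t) = \sqrt\e\,(X_{[t/\e]} - E_0^\w[X_{[t/\e]}])$. Here $X_n - E_0^\w[X_n]$ is a $P_0^\w$-martingale with respect to the filtration $\sigma\{X_1,\dots,X_n\}$, with increments $X_{k+1}-X_k - D(\w_k)$, and I would apply the martingale functional CLT (e.g.\ Theorem 3 of \cite{rass-sepp-05} as cited in Example \ref{example}). The two ingredients are: (i) a law of large numbers for the quadratic variation, $\frac1n\sum_{k=0}^{n-1} E_0^{\w_k}[(X_{k+1}-X_k-D(\w_k))(\cdots)^T] \to \Gamma$ for a nonrandom matrix $\Gamma$, which follows from the ergodic theorem for the chain $(\w_k)$ under the stationary measure $\P_\infty$ together with the second-moment hypothesis $\int E_0^\w[|X_1|^2]\,\P_\infty(d\w)<\infty$; and (ii) a Lindeberg / negligibility-of-large-jumps condition, which again follows from the same uniform-integrability input via the ergodic theorem applied to the truncated second moments. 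This gives a quenched CLT for $\widetilde B_\e$ to a Brownian motion with covariance $\Gamma$ (nonrandom, independent of $\w$), and combined with step two, the same for $B_\e$.

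The main obstacle I expect is step two — extracting the martingale--coboundary decomposition with the right a.s.\ (not merely $L^2$) control on the remainder from the bare variance bound \eqref{bound}, and in particular justifying $M_n\equiv 0$ rigorously rather than heuristically; one must carefully track that the coboundary term $R_n$ telescopes a stationary $L^2$ function and that the linear-in-$n$ growth of $\E_\infty[M_n^2]$ is genuinely incompatible with $\mathcal O(n^\eta)$, $\eta<1$, unless the increment variance vanishes. A secondary technical point is passing from stationarity under $\P_\infty$ (for the environment chain) to almost-sure statements under $P_0^\w$ for $\P_\infty$-a.e.\ $\w$, which requires the usual argument that quenched and annealed a.s.\ statements agree when the relevant random variables are measurable with respect to the appropriate $\sigma$-algebras, together with a maximal inequality to go from convergence along dyadic subsequences to the full $\max_{k\le n}$.
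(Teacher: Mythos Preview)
There is a genuine conceptual gap that derails both step two and step three. You conflate the quenched-mean process $Y_n=E_0^\w[X_n]-nv$ with the path-additive functional $S_n=\sum_{k=0}^{n-1}g(\w_k)$, where $\w_k=T^{k,X_k}\w$. These are different objects: $Y_n=\sum_{k=0}^{n-1}\Pi^k g(\w)=E_0^\w[S_n]$ depends on $\w$ alone, while $S_n$ depends on the whole path. The Maxwell--Woodroofe criterion uses the bound $\|Y_n\|_{L^2(\P_\infty)}^2=\mathcal O(n^\eta)$ as its \emph{input} and produces a decomposition of $S_n$, not of $Y_n$: one gets $S_n=\bar M_n+R_n$ with $\bar M_n$ a genuine (typically nonzero) martingale and $R_n=o(\sqrt n)$. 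Your argument that ``the martingale part must vanish because $\mathrm{Var}(Y_n)=\mathcal O(n^\eta)$'' collapses once you notice the decomposition is for $S_n$, whose variance does grow linearly. The same confusion reappears in step three: $X_n-E_0^\w[X_n]$ is \emph{not} a $P_0^\w$-martingale in general, because $E_0^\w[X_{n+1}\mid\mathcal F_n]=X_n+D(T^{n,X_n}\w)$ while $E_0^\w[X_{n+1}]-E_0^\w[X_n]=E_0^\w[D(T^{n,X_n}\w)]$; these agree only when $D(T^{n,\cdot}\w)$ is constant along the walk, i.e.\ in the very special setting of Example~\ref{example}.

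The paper's route fixes both issues at once. The correct martingale is $M_n=X_n-\sum_{k=0}^{n-1}D(T^{k,X_k}\w)$, so that $X_n-nv=M_n+S_n$. The variance hypothesis \eqref{bound}, which is exactly the bound on $\|\sum_k\Pi^k g\|_2$, feeds into the Maxwell--Woodroofe/Derriennic--Lin machinery to give $S_n=\bar M_n+R_n$ with $R_n=o(\sqrt n)$ quenched-a.s.; then $X_n-nv=(M_n+\bar M_n)+R_n$ and the martingale functional CLT applied to $M_n+\bar M_n$ (whose increments are stationary ergodic under $\P_\infty$) yields the invariance principle for $B_\e$. The statement about $n^{-1/2}\max_{k\le n}|Y_k|\to0$ then follows because $Y_n=E_0^\w[S_n]=E_0^\w[\bar M_n]+E_0^\w[R_n]$ and $\bar M_n$ is a $P_0^\w$-martingale started at $0$, so $E_0^\w[\bar M_n]=0$ and one is left with controlling $E_0^\w[R_n]$; this (and the maximal version) is the ``hard work'' the paper defers to \cite{rass-sepp-05}. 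Your outline can be salvaged, but only after replacing $Y_n$ by $S_n$ in the decomposition and replacing $X_n-E_0^\w[X_n]$ by the drift-corrected martingale $M_n$.
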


\begin{proof}
When $\w_{0,0}$ is supported on $\Z^d$ this theorem is a special case of Theorem 2 of \cite{rass-sepp-05}. However, the proof 
goes through word for word when $\Z^d$ is replaced by $\R^d$. The main idea is to first observe that $M_n=X_n-\sum_{k=0}^{n-1}D(T^{k,X_k}\w)$ is a
martingale. Next, one uses the Markov chain arguments alluded to on page \pageref{MC arguments} to
decompose $\sum_{k=0}^{n-1}D(T^{k,X_k}\w)=\bar M_n+R_n$ with $\bar M_n$
another martingale and, due to \eqref{bound}, $R_n=o(\sqrt n)$, $P_0^\w$-almost surely for $\P_\infty$-almost every $\w$. 
This is where the hard work is.
The result then follows from the invariance principle for stationary ergodic martingales. 
\end{proof}

In Section \ref{pinf} we construct a probability measure $\P_{\infty}$ which is invariant and ergodic for the \textit{environment Markov chain}. We also compare $\P_{\infty}$ to $\P$. In Section \ref{secbound} we check condition \eqref{bound} and prove Theorem \ref{thm}.

\section{Construction of the invariant measure}
\label{pinf}

Let us start with some notation. Denote the quenched law of $X_n$ by $\pi_x^{\w,n}(A)=P_x^{\w}\{X_n\in A\}$. This is a  probability measure on $\R^d$. Also let $\pi_x^n(A)=P_x\{X_n\in A\}=\int\pi_x^{\w,n}(A)\P(d\w)$.  $\P_n$ will denote the probability measure on $\Omega$ defined as
\[ \P_n(S) =P_0\{T^{n,X_n}\w\in S\}=\iint \one\{T^{n,y}\w \in S\}\, \pi_0^{\w,n}(dy)\P(d\w).\]
This is the law of the environment as seen from $X_n$.
Note that for any bounded function $f(\w)$,
	\[\int f(\w)\,\P_n(d\w)=\iint f\big(T^{n,y}\w\big)\,\pi_0^{\w,n}(dy)\P(d\w).\]

The rest of the section is devoted to the proof of the following theorem.

\begin{theorem}\label{Pinfty}
Let $\P$ be shift invariant and satisfy the independence assumption A1. Then, there exists a probability measure $\P_\infty$ on $(\Omega,\kS)$ such that  
$\P_{\infty}\big\vert_{\kS^+_{-n}}=\P_n\big\vert_{\kS^+_{-n}}$ for any $n\ge0$. Moreover, $\P_\infty$ is invariant and ergodic for the Markov chain with transition operator $\Pi$.
\end{theorem}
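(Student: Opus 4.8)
The plan is to obtain $\P_\infty$ as a Kolmogorov (projective) limit of the laws $\P_n$ of the environment seen from $X_n$, and then to read off invariance and ergodicity from the construction. The heart of the matter is the compatibility relation
\[
\P_{m}\big|_{\kS^+_{-n}}=\P_{n}\big|_{\kS^+_{-n}}\qquad\text{for all }m\ge n\ge 0,
\]
which I would prove in its one-step form $\P_{n+1}|_{\kS^+_{-n}}=\P_n|_{\kS^+_{-n}}$ by induction on $n$, the general case following by transitivity since $\kS^+_{-n}\subset\kS^+_{-(n+1)}$. Two facts drive the induction. First, the environment chain has transition kernel $\tilde\pi$ (equivalently operator $\Pi$), so $\P_{n}=\P_{n-1}\tilde\pi$ and hence $\int f\,d\P_{n+1}=\int \Pi f\,d\P_{n}$ for bounded $f$. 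Second, if $f$ is $\kS^+_{-n}$-measurable with $n\ge1$ then $\Pi f$ is $\kS^+_{-(n-1)}$-measurable: $\Pi$ composes $f$ with the shift $T^{1,x}$ and averages against $\w_{0,0}$, so it reads levels $\ge-n+1$ of $\w$ and, through $\w_{0,0}$, level $0\ge-(n-1)$. The base case $n=0$, namely $\P_1|_{\kS^+_0}=\P|_{\kS^+_0}$, is exactly the identity $\E[\Pi f]=\E[f]$ for $\kS^+_0$-measurable $f$, and this is where Assumption A1 enters: the jump at time $0$ is governed by $\w_{0,\cdot}$, which is independent of $(\w_{m,\cdot})_{m\ge1}$, while the spatial shift-invariance of $\P$ absorbs the translation by $X_1$. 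For the inductive step, if $f$ is $\kS^+_{-n}$-measurable with $n\ge1$,
\[
\int f\,d\P_{n+1}=\int \Pi f\,d\P_{n}=\int \Pi f\,d\P_{n-1}=\int f\,d\P_{n},
\]
the middle equality being the inductive hypothesis applied to $\Pi f\in\kS^+_{-(n-1)}$ and the last being $\P_{n-1}\tilde\pi=\P_n$.

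Since $\bigcup_{n\ge0}\kS^+_{-n}$ is an algebra generating $\kS$ and $\cM_1(\R^d)$ is Polish, the compatible family $\{\P_n|_{\kS^+_{-n}}\}_{n\ge0}$ extends, by the Kolmogorov extension theorem, to a probability measure $\P_\infty$ on $(\Omega,\kS)$ with $\P_\infty|_{\kS^+_{-n}}=\P_n|_{\kS^+_{-n}}$ for every $n\ge0$. Invariance is then immediate: for $f$ that is $\kS^+_{-n}$-measurable with $n\ge1$, $\Pi f$ is $\kS^+_{-(n-1)}$-measurable, so
\[
\int\Pi f\,d\P_\infty=\int\Pi f\,d\P_{n-1}=\int f\,d\P_n=\int f\,d\P_\infty,
\]
and for $n=0$ one uses the base-case identity $\P_1|_{\kS^+_0}=\P|_{\kS^+_0}$ directly. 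A monotone-class argument promotes $\int\Pi f\,d\P_\infty=\int f\,d\P_\infty$ to all bounded measurable $f$, and then, $\Pi$ being an $L^1(\P_\infty)$-contraction, to all $f\in L^1(\P_\infty)$; this is the $\Pi$-invariance of $\P_\infty$.

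The remaining --- and main --- difficulty is ergodicity. I would prove the stronger assertion that the environment chain is mixing under $\P_\infty$: that any two cylinder events $A$ (of $\bar\w_0,\dots,\bar\w_j$) and $B$ (of $\bar\w_0,\dots,\bar\w_\ell$) of the trajectory become \emph{exactly} independent once separated in time by more than their combined ``depth'', which gives mixing by approximation on the generating algebra of cylinders, and hence ergodicity of the trajectory shift --- equivalently of $\P_\infty$ for the chain. The mechanism: from the construction, under $\P_\infty$ the levels $(\w_{m,\cdot})_{m\ge0}$ are i.i.d.\ and independent of $(\w_{m,\cdot})_{m<0}$, because $X_n$ is built only from levels $<n$ (using A1) and so is independent of the levels $\ge n$. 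If $B$ inspects the levels of $\bar\w_0,\dots,\bar\w_\ell$ only in a window $[-r_B,r_B]$, then $\theta^{-n}B$ --- the same test applied to $\bar\w_n,\dots,\bar\w_{n+\ell}$ --- is a function of the environment levels in $[n-r_B,\,n+r_B+\ell]$ re-centred at $X_n$ and of the walk's coins on $[n,n+\ell)$. Re-centring instead at $X_{n-r_B}$, the block $(\w_{n-r_B+i,\,\cdot+X_{n-r_B}})_{i}$ is --- by A1 and the spatial stationarity of $\P$ --- a fresh i.i.d.\ block whose law depends neither on $X_{n-r_B}$ nor on the past, and the displacement $X_n-X_{n-r_B}$ together with the coins on $[n,n+\ell)$ are measurable functions of this fresh block and of fresh coins. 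Hence, once $n$ exceeds the combined depth of $A$ and of $B$, $\theta^{-n}B$ is independent of $A$ and $\P_\infty(A\cap\theta^{-n}B)=\P_\infty(A)\,\P_\infty(B)$. The delicate point --- and the reason this step needs no spatial mixing hypothesis (none is assumed here) --- is precisely the bookkeeping showing that the possibly large displacement $X_n$ enters $\theta^{-n}B$ only as a spatial re-centring, which the spatial stationarity of $\P$ neutralises.
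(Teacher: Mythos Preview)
Your proof is correct and follows the same overall architecture as the paper: establish compatibility of the $\P_n$ on $\kS_{-n}^+$, extend by Kolmogorov, then verify invariance and ergodicity. The differences are in execution rather than strategy.

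For compatibility, the paper proves $\P_n|_{\kS_{-m}^+}=\P_m|_{\kS_{-m}^+}$ for all $n\ge m$ in one shot by writing out the integral, splitting the path at time $n-m$ via the Markov property, and using A1 together with shift invariance to decouple and then integrate out the first $n-m$ steps. Your inductive argument via $\P_{n+1}=\P_n\tilde\pi$ and the observation that $\Pi$ maps $\kS_{-n}^+$-measurable functions to $\kS_{-(n-1)}^+$-measurable ones is a clean repackaging of the same content; it has the virtue of isolating the single place A1 is used (the base case $\E[\Pi f]=\E[f]$ for $\kS_0^+$-measurable $f$). The invariance proofs are essentially identical.

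For ergodicity, the paper simply cites Lemma~1 of Rassoul-Agha and Sepp\"al\"ainen (2005): absorbing sets for the chain are shift-invariant, hence $\P$-trivial, and one transfers this to $\P_\infty$ by local approximation and the equality $\P_\infty|_{\kS_{-n}^+}=\P_n|_{\kS_{-n}^+}$. You instead prove mixing of the trajectory shift directly, exploiting that under $\P_\infty$ the levels $\ge0$ form a fresh i.i.d.\ block independent of the negative levels, and that the spatial displacement of the walk is neutralised by stationarity. This is a different and stronger conclusion. The re-centring at $X_{n-r_B}$ rather than $X_n$ is the right move, since it places the entire window $[n-r_B,\,n+\ell+r_B]$ of environment levels needed for $\theta^{-n}B$ inside the fresh block. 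One point you should make explicit in a write-up: the cylinder events you use look at only a finite window of time-levels of each $\bar\w_i$, and such events form a $\pi$-system generating the trajectory $\sigma$-algebra, so that exact independence on them suffices for mixing.
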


\begin{proof}
We first show that measures $\P_n$ form a consistent family. 

\begin{lemma}
For $n\ge m\ge0$ we have 
$\P_n  \big\vert_{\kS_{-m}^+}=\P_m \big\vert_{\kS_{-m}^+}.$
\end{lemma}
\begin{proof}
Fix an $A \in \kS_{-m}^+$. Use the Markov property to write
\begin{align*}
\P_n(A)&=  \iint  \one\{T^{n,x} \w \in A\}\,\pi_0^{\w,n}( dx) \P(d\w)\\
&= \iiint\one\{T^{m,x} T^{n-m,0}\w \in A\}\,\pi_0^{\w,n-m}( dy)\pi_y^{T^{n-m,0}\w,m}( dx) \P(d\w).
\end{align*}
By shift invariance and the independence assumption A1
\begin{align*}
\P_n(A)&=\iiint \one\{T^{m,x} \w \in A\} \,\pi_0^{T^{-(n-m),0}\w,n-m}( dy)\pi_y^{\w,m}( dx) \P(d\w)\\
&=\int  \Big[\iint \one\{T^{m,x} \w \in A\}\, \pi_y^{\w,m}( dx)\P(d\w)\Big]\,\pi_0^{n-m}(dy).
\end{align*}
Using shift invariance again
\begin{align*}
&\iint \one\{T^{m,x} \w \in A\} \,\pi_y^{\w,m}( dx)\P(d\w)\\
&\qquad\qquad=\iint \one\{T^{m,x} \w \in A\}\, \pi_0^{T^{0,y}\w,m}(-y+dx)\P(d\w) \\
&\qquad\qquad= \iint \one\{T^{m,x-y} \w \in A\} \,\pi_0^{\w,m}(-y+dx)\P(d\w)\\
&\qquad\qquad= \iint \one\{T^{m,z} \w \in A\} \,\pi_0^{\w,m}( dz)\P(d\w)=\P_m(A) .
\end{align*}
We have thus shown that for $A \in \kS_{-m}^+$ we have  $\P_n(A)=\P_m(A)$.
\end{proof}

Kolmogorov's consistency theorem now gives the existence of a probability measure $\P_{\infty}$ such that 
$\P_{\infty}\big\vert_{\kS_{-n}^+}=\P_n\big\vert_{\kS_{-n}^+}$ for all $n\ge0$. 

Recall the transition operator  $\Pi$ of  the Markov chain of the environment as seen from the point of view of the particle $(T^{n, X_n}\w)$. 
We now prove that $\P_\infty$ is invariant and ergodic for this Markov chain.

\begin{lemma}
Probability measure $\P_{\infty}$ is invariant under $\Pi$.
\end{lemma}

\begin{proof}
Let $f$ be  a bounded  $\kS_{-k}^+$-measurable function. Then $\Pi f$ is also bounded and $\kS_{-k+1}^+$-measurable. Now write
\[\int \Pi f(\w)\,\P_n(d\w)= \iint f(T^{1,x}T^{n,y}\w)\,\pi_0^{T^{n,y}\w,1}(dx)\pi_0^{\w,n}(dy)\P(d\w).\]
Make the change of variables $(x,y)$ to $(z,y)$ where $z=x+y$ and note that $\pi_0^{T^{n,y}\w,1}(B)=P_{n,y}^{\w}\{X_1\in y+B\}$, then
use the Markov property to conclude that
\begin{align*}
\int \Pi f(\w)\,\P_n(d\w)&= \iint  f(T^{n+1,z}\w)\,\pi_0^{\w,n+1}(dz)\P(d\w)\\
&=\int f(\w)\,\P_{n+1}(d\w). \end{align*}
Taking $n\ge k$ shows that $\int \Pi f\,d\P_\infty=\int f\,d\P_\infty$.
\end{proof}

\begin{lemma}
The invariant measure $\P_{\infty}$ is ergodic for the Markov chain with generator $\Pi-I$.
\end{lemma}
\begin{proof}
The proof is identical to that of Lemma 1 of \cite{rass-sepp-05} and is omitted. Roughly, the idea is that since moves of the Markov chain consist of shifts,
absorbing sets are shift-invariant and thus of trivial $\P$-measure. The claim then follows from approximating with local sets and 
using equality of the restrictions of $\P$ and $\P_\infty$ onto $\kS_{-n}^+$. 
\end{proof}

The proof of Theorem \ref{Pinfty} is complete. 
\end{proof}

\section{Bound on the variance of the quenched mean}
\label{secbound}

We now have a probability measure $\P_{\infty}$ on $(\Omega, \kS)$ that is invariant under $\Pi$ and ergodic for the Markov chain on $\Omega$ with generator $\Pi-I$. The next important step is to verify that it satisfies \eqref{bound} of Theorem \ref{thm2}, i.e.\ that the variance of the quenched mean $E_0^\w[X_n]$ is
subdiffusive. 

\begin{proposition}\label{the proposition}
Assume $\P$ is shift invariant and satisfies A1 through A4. Then, $\P_\infty$ from Theorem \ref{Pinfty} satisfies \eqref{bound} with $\eta\le1/2+13/p$, where
$p$ is the exponent in Assumption A3. 
\end{proposition}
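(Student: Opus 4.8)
The plan is to bound the variance of the quenched mean $E_0^\w[X_n]$ under $\P_\infty$ by unfolding it along the (seen-from-the-particle) trajectory and then controlling the resulting sum of covariances using the spatial mixing hypothesis A3. Write
\[
E_0^\w[X_n]-\E_\infty[E_0^\w[X_n]]=\sum_{k=0}^{n-1}\big(g(T^{k}\w)-\E_\infty[g]\big),
\]
where $g(\w)=D(\w)=\int x\,\w_{0,0}(dx)$ is the local drift, which depends only on $\w_{0,0}$, and $T^k$ here denotes the environment Markov chain semigroup (i.e.\ $\Pi^k$) — more precisely one writes $E_0^\w[X_n]=\sum_{k=0}^{n-1}E_0^\w[D(T^{k,X_k}\w)]$ using that $M_n=X_n-\sum_{k=0}^{n-1}D(T^{k,X_k}\w)$ is a $P_0^\w$-martingale. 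So the variance expands as a double sum $\sum_{j,k}\mathrm{Cov}_\infty\!\big(E_0^\w[D(T^{j,X_j}\w)],E_0^\w[D(T^{k,X_k}\w)]\big)$. The key point is that $D$ is $\sigma\{\w_{0,0}\}$-measurable, so after recentring at the random walk position the two factors in a covariance term depend on the environment at two \emph{space-time points} $(j,X_j)$ and $(k,X_k)$; by A1 the time levels are independent, so for $j\neq k$ only the spatial decorrelation along one time slice matters, and for $j=k$ we need the two-point correlation \eqref{mix}/\eqref{mix2} at spatial separation $X_j-X_j'$ between two independent copies of the walk.

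The main steps, in order, are: (1) Replace $\P_\infty$ by $\P$ up to a controlled error, using $\P_\infty|_{\kS_{-n}^+}=\P_n|_{\kS_{-n}^+}$ from Theorem \ref{Pinfty} and the fact that $D(T^{k,X_k}\w)$ is measurable with respect to the environment at nonnegative time levels; this lets one compute the relevant expectations and covariances under the more tractable measure $\P$ (really under $P_0\otimes P_0$ for two independent walks), at the cost of a shift-in-time argument. (2) For the diagonal terms $j=k$, run two independent walks $X_k,X_k'$ in the same environment, condition on their positions, and apply \eqref{mix2} with $f=h=D$ (after truncating $D$ to make it bounded, using A2 to control the truncation tail): this yields a bound of the form $\E[|D|\,\mathbf{1}]^2 + C\,E^{\otimes2}_0[\,|X_k-X_k'|^{-p}\,]$, and the cross term vanishes in the centred version. (3) For the off-diagonal terms $j<k$, use A1 to split at the time level of the later point and a similar two-walk / Markov-property argument, producing again a contribution governed by a negative moment of $|X_k-X_j|$ or a spatial decorrelation factor. (4) Sum over $j,k\le n$: the diagonal contributes $O(n)$; the off-diagonal terms are summable provided $p$ is large enough, and the anomalous growth comes from the regime where the two walk positions are close. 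Here is where A4 enters decisively: A4 guarantees that $\w_{0,0}$ is genuinely random, hence the walk has a nondegenerate local CLT-type spreading, so $P_0^{\otimes2}\{|X_k-X_k'|\le r\}\lesssim (r/\sqrt k)^{d}\wedge 1$ up to constants, and the negative moment $E^{\otimes2}_0[|X_k-X_k'|^{-p}]$ (with the truncation at the lattice scale / atom scale supplied by A4) is $O(k^{-?})$; tracking constants gives the stated exponent $\eta\le 1/2+13/p$.

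The hard part will be Step (4), and specifically making the local spreading estimate quantitative and uniform: one must show that A4 alone (with no continuity assumption on $\w_{0,0}$, no density) forces enough ``non-atomicity at small scales'' of the law of $X_k-X_k'$ to bound the negative moments of the gap, and to do so with explicit powers of $k$ so that the exponent $13/p$ drops out of the bookkeeping. This presumably requires a careful decomposition of the walk into blocks where one extracts an independent, genuinely spread-out increment (using A4 to get a nontrivial ``free'' step), a Fourier or concentration-function argument to turn that into a bound on $P_0^{\otimes2}\{|X_k-X_k'|\le r\}$, and then a summation balancing the mixing decay $|x|^{-p}$ against the spreading gain; choosing the block length and the truncation level of $D$ optimally is what produces the $p>26$ threshold noted in the remark after A3. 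Steps (1)–(3) are, by contrast, bookkeeping with the Markov property and the measure-comparison $\P_\infty\leftrightarrow\P$, together with the elementary consequence \eqref{mix2} of A3.
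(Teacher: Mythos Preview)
Your setup in Steps (1)--(3) is broadly right, and the paper does reduce to the same sum $\sum_{k<n} \EMC_0[\phi(Y_k)]$ with $\phi(x)=\E[g(\w)\cdot g(T^{0,x}\w)]$, $g=D-v$, and $Y_k=\Xtil_k-X_k$ for two independent walks in the \emph{same} environment. One correction: under A1 the off-diagonal terms $j\neq k$ vanish \emph{exactly} (Fubini together with $\E[g]=0$), so Step (3) is trivial and contributes no residual negative-moment term in $|X_k-X_j|$.

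The real gap is Step (4). The process $Y_k$ is \emph{not} a random walk: its one-step law from $x$ depends on $x$ through the environment correlation at spatial separation $x$, and when $|x|$ is small this step can be nearly degenerate (if $\w_{0,0}$ and $\w_{0,x}$ are highly correlated, both walks jump almost identically and $Y$ barely moves). Consequently there is no reason to expect a local-CLT or concentration-function bound of the form $P\{|Y_k|\le r\}\lesssim (r/\sqrt k)^d$; A4 only guarantees nondegeneracy along \emph{one} coordinate, and random-walk Fourier/Kolmogorov--Rogozin methods do not apply to this inhomogeneous chain. The paper proceeds by a completely different mechanism: it compares $Y$ to the genuine symmetric random walk $\Ybar$ (two walks in \emph{independent} environments), a comparison valid only when $|Y|$ is large (A3 then makes the transitions nearly factor), while near the origin A4 is used merely to extract a small uniform probability of a jump of definite size in some direction. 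This is bootstrapped, via a multiscale recursion, from an a priori exponential bound into a polynomial exit-time bound $\sup_{x\in[-r,r]^d}\EMC_x[U_r]\le Cr^{13}$ for the chain $Y$. An excursion decomposition (time inside $[-n^\varepsilon,n^\varepsilon]^d$ controlled by this exit-time bound; excursions outside compared to $\Ybar$ and shown to be long via a one-dimensional overshoot/first-passage estimate) then yields $\sum_{k<n}\PMC_0\{|Y_k|\le n^\varepsilon\}\le Cn^{1+13\varepsilon-p\varepsilon/2}$. The $13$ in $\eta\le 1/2+13/p$ (and hence the threshold $p>26$) is exactly the exponent in this exit-time bound, not the output of any spreading estimate; your proposed route would neither produce it nor go through without the Markov-chain exit-time analysis.
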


\begin{proof}
Since $\P_{\infty}\big\vert_{\kS_0^+}=\P_0\big\vert_{\kS_0^+}=\P\big\vert_{\kS_0^+}$ and the quantity inside the $\E_{\infty}$ expectation in \eqref{bound}  is measurable with respect to $\kS_0^+$, \eqref{bound} can be rewritten as 
\begin{equation} 
\label{bound2}
\E\Big[\big\vert E_0^{\w}[X_n]-nv\big\vert^2\Big] 
={\mathcal O}(n^{\eta}).
\end{equation}
Define $g(\w)=E_0^\w[X_1]-v$. Note that $\E[g]=0$. A simple computation gives 
\begin{equation}
\label{bound3}
\E\Big[\big\vert E_0^{\w}[X_n]-nv\big\vert^2\Big] = \sum_{k,\ell=0}^{n-1}\iiint g(T^{k,x}\w)\cdot g(T^{\ell,y} \w)\,\pi_0^{\w,k}(dx)\pi_0^{\w,\ell}(dy)  \P(d\w)
\end{equation}
By the moment assumption A2, the fact that $g$ is $\kS_0$-measurable, and the $\Pi$-invariance of $\P_\infty$, we have
	\begin{align*}
	\iint |g(T^{k,x}\w)|^2\,\pi_0^{\w,k}(dx)\P(d\w)&=\iint |g(T^{k,x}\w)|^2\,\pi_0^{\w,k}(dx)\P_\infty(d\w)\\
	&=\E_\infty[|g|^2]=\E[|g|^2]<\infty.
	\end{align*}
Using the inequality $|a\cdot b|\le2(|a|^2+|b|^2)$ we see that
	\[\iiint \Big\vert g(T^{k,x}\w)\cdot g(T^{\ell,y} \w)\Big\vert\,\pi_0^{\w,k}(dx)\pi_0^{\w,\ell}(dy)  \P(d\w)<\infty.\]

Consider a term in the sum in \eqref{bound3} with $k<\ell$. Since $\int  g(T^{k,x}\w)\,\pi_0^{\w,k}(dx)$ and $\pi_0^{\w,\ell}$  are measurable with respect to $\sigma\{\w_{m,\cdot}:m\le \ell-1\}$ and $g(T^{\ell,y}\w)$ is $\kS_\ell$-measurable, we have by Fubini's theorem
\begin{align*}
&\iiint  g(T^{k,x}\w)\cdot g(T^{\ell,y} \w)\,\pi_0^{\w,k}(dx)\pi_0^{\w,\ell}(dy)  \P(d\w)\\
&\qquad= \int\Big[\int g(T^{\ell,y}\w)\P(d\w)\Big] \cdot\Big[  \iint g(T^{k,x}\tilde\w)\,\pi_0^{\tilde\w,k}(dx)\pi_0^{\tilde\w,\ell}(dy) \P(d\tilde\w)\Big]=0.
\end{align*}
This shows that terms in \eqref{bound3} with $k\ne\ell$ vanish. Thus,
\[ \E\Big[\big\vert E_0^{\w}[X_n]-nv\big\vert^2\Big] = \sum_{k=0}^{n-1}\iiint g(T^{k,x}\w)\cdot g(T^{k,y} \w)\,\pi_0^{\w,k}(dx)\pi_0^{\w,k}(dy)  \P(d\w).\]

For two independent random walks $X_k$ and $\Xtil_k$ in the same random environment $\w$, define
\[ \pi_{x,y}^{\w,k}( A,  B)=P_{x}^{\w}\{X_k \in A\}P_y^{\w} \{\Xtil_k \in B\}\]
and 
\[  \pi_{x,y}^{k}( A,  B)=\E\Big[P_{x}^{\w}\{X_k \in A\}P_y^{\w} \{\Xtil_k \in B\}\Big].\]

Since $\pi_0^{\w,k}$ is $\sigma\{\w_{m,\cdot}:m\le k-1\}$-measurable and $g(T^{k,x}\w)$ and $g(T^{k,y}\w)$ are $\kS_k$-measurable, another application of  Fubini's theorem shows that 
\begin{equation}
\label{bound4}
 \E\Big[\big\vert E_0^{\w}[X_n]-nv\big\vert^2\Big] = \sum_{k=0}^{n-1}\iint \phi(y-x)\, \pi_{0,0}^k(dx,dy),
  \end{equation}
   where $\phi(x)=\E\big[g(\w)\cdot g(T^{0,x}\w)\big]$. 
   
Consider now the Markov chain $Y_k$ with transition probabilities given by
  \[ P\{Y_1 \in A \,|\, Y_0=x\} =\iint \one\{z-y\in A\}\,\pi_{0,x}^1(dy,dz).\]
We will use $\PMC_x$ to denote the law of this Markov chain, when started at $Y_0=x$. $\EMC_x$ will denote the corresponding expectation.
Due to the independence assumption A1, the law of $\Xtil_k-X_k$, induced by $\int P_0^\w\otimes P_x^\w\,\P(d\w)$, is the same as that of $Y_k$, given $Y_0=x$.
 So \eqref{bound4} now becomes
  \begin{equation} \label{eq1}\E\big[\vert E_0^{\w}[X_n]-nv\vert^2\big]=\sum_{k=0}^{n-1}\EMC_0\big[\phi(Y_k)\big].\end{equation}  
  
To bound  the right-hand side of \eqref{eq1} we start with a bound on the function $\phi$. 

\begin{lemma}
\label{cor}
Let $\P$ be shift invariant and satisfy the moment assumption A2 and the mixing assumption A3.
Then, there exists a constant $C>0$ so that  $\vert \phi(x)\vert \le \frac{C}{\vert x\vert^p}$,
where $p$ is as in \eqref{mix}.
\end{lemma}
\begin{proof}
Write $g=g^+-g^-$ where $g^+$ and $g^-$ are the positive and negative parts of $g$, coordinate by coordinate. 
Then
\begin{align}
\label{phi}
\begin{split}
|\phi(x)| =\Big \vert \E \Big[g^+(\w)\cdot g^+(T^{0,x}\w)&-g^-(\w)\cdot g^+(T^{0,x}\w)\\
&-g^+(\w)\cdot g^-(T^{0,x}\w)+g^-(\w)\cdot g^-(T^{0,x}\w)\Big]\Big\vert.
\end{split}
\end{align}
From \eqref{mix2} we have
\[ \big(1-\tfrac{C}{\vert x\vert^p}\big) |\E[g^+]|^2 \le \E\big[g^+(\w)\cdot g^+(T^{0,x}\w)\big]\le\big(1+\tfrac{C}{\vert x\vert^p}\big) |\E[g^+]|^2 \]
and 
\[ \big(1-\tfrac{C}{\vert x\vert^p}\big) \E[g^+]\cdot\E[g^-] \le \E\big[g^-(\w)\cdot g^+(T^{0,x}\w)\big]\le\big(1+\tfrac{C}{\vert x\vert^p}\big) \E[g^+]\cdot\E[g^-] .\]
Observing that $\E[g^+]=\E[g^-]$ and subtracting the above two expressions we have
\[ \Big\vert \E\big[ g^+(\w)g^+(T^{0,x}\w)-g^-(\w)g^+(T^{0,x}\w) \big]\Big\vert \le \frac{C}{\vert x\vert^p}.\]
 A similar bound can be obtained for the last two terms in \eqref{phi}.
\end{proof}

Now return to \eqref{eq1}.  For simplicity of notation define the measure
\[ \nu_0^k(A)=\PMC_0\{Y_k \in A\}.\]
Fix $\varepsilon>0 $ so that $p\varepsilon\le 1$, where $p$ is the exponent from \eqref{mix}. We get 
\[ \sum_{k=0}^{n-1}\EMC_0\big[\phi(Y_k)\big] =\sum_{k=0}^{n-1}\int_{\vert y\vert>n^{\varepsilon}} \phi(y)\,\nu_0^k(dy)+\sum_{k=0}^{n-1}\int_{\vert y\vert \le n^{\varepsilon}} \phi(y)\,\nu_0^k(dy).\]
By Lemma \ref{cor} the first term is bounded by
\[\sum_{k=0}^{n-1}\int_{\vert y\vert>n^{\varepsilon}} \vert\phi(y)\vert\,\nu_0^k(dy)\le \sum_{k=0}^{n-1} \int_{\vert y\vert\ge n^{\varepsilon}}\tfrac{C}{\vert y\vert^p}\,\nu_0^k(dy)\le Cn^{1-p\varepsilon}.\]
 Since $|\phi(y)|\le\E[|g|^2]$, \eqref{bound2} would follow if we show 
\begin{equation} \label{eq2} \sum_{k=0}^{n-1} \PMC_0\{Y_k \in [-n^{\varepsilon},n^{\varepsilon}]^d\} \le Cn^{\eta'}\end{equation}
for some $\eta'<1$. To this end, we will need to compare the Markov chain $Y_k$ to a random walk $\Ybar_k$ whose transition probabilities are given by 
\begin{align*}
P\{\Ybar_1 \in A \,|\, \Ybar_0=x\}&=\iint \one\{z-y\in A\}\,\pi_{0}^1(dy)\pi_x^1(dz)\\
&=\iint \one\{\tilde z-y\in A-x\}\,\pi_{0}^1(dy)\pi_0^1(d\tilde z)\\
&=P\{\Ybar_1\in A-x\,|\,\Ybar_0=0\}.
\end{align*}
(Recall the definition of $\pi_0^1$ introduced in Section \ref{pinf}.)
While $Y_k=\Xtil_k-X_k$ where $X$ and $\Xtil$ are independent walks in the same environment, $\Ybar_k=\Xhat_k-X_k$ where $X$ and $\Xhat$ are independent walks in independent environments. We will use $\Pbar_y$ and $\Ebar_y$ for the law and expectation of the $\Ybar$ walk starting at $y$.
We will also denote by $P_{x,y}^{\w}$ the joint law of two independent random walks $X$ and $\Xtil$ in the same environment $\w$ starting at $X_0=x$ and $\Xtil_0=y$.

To prove \eqref{eq2} we will adapt the strategy in Appendix A of \cite{rass-sepp-09} to our situation. 
We first show that the Markov chain expected exit time from boxes grows at most exponentially in the diameter of the box. Then, using a multiscale
recursion argument, we improve this to a polynomial bound. The upshot is that the Markov chain does not spend a lot of time in $[-n,n]^d$. On the other hand, 
outside this box the chain is close to the symmetric random walk which has long excursions. 

We start with a few crucial observations about the Markov chains $Y$ and $\Ybar$,  which we put in a lemma. Let $a^j$ denote the $j$-th coordinate of a vector $a$.
Let $\lceil s\rceil$ denote the smallest integer larger than $s$.

\begin{lemma}
The following statements hold.
\begin{itemize}
\item[{\rm(a)}] $Y_1$ and $\Ybar_1$ have a finite second moment and are symmetric about $0$.
\item[{\rm(b)}]  If $\PMC_0\{Y_1^j\ne0\}>0$, then there exist $M>0$, $L>0$, and $\delta>0$ such that we have 
\begin{align}\label{ellipt2}
\PMC_x\big\{Y_1^j-x^j\ge L/(\lceil\tfrac M{x^j}\rceil\vee1)\big\}\ge {\delta^2}/{4}(\lceil\tfrac M{x^j}\rceil\vee1)^2\text{\quad if }x^j>0.
\end{align}
\item[{\rm(c)}] If $\PMC_0\{Y_1^j\ne0\}>0$,  then there exists a $\kappa\in(0,M)$ such that 
	\begin{align}\label{ellipt3}
	\PMC_x\big\{|Y_1^j|>\kappa\big\}>\kappa\text{\quad if }|x^j|\le\kappa.
	\end{align}
\item[{\rm(d)}] Let $U_r=\inf\{ n\ge 0: Y_n \notin [-r,r]^d\}$ be the exit time from the centered cube of side length $2r$ for the Markov chain $Y$. Then  there is a constant $0<K< \infty$ such that 
\begin{align}\label{exponential growth}
\sup_{x \in [-r,r]^d}\EMC_x[U_r] \le K^r \text{ for all } r\ge 0.
\end{align}
\end{itemize}
\end{lemma}
\begin{proof}
The second moments are finite because of \eqref{mom}. Exchanging the roles of $X$ and $\Xtil$ (respectively, $X$ and $\Xhat$), it is clear that both $Y_1$ and $\Ybar_1$ are symmetric about $0$. We next prove (b). 

Use mixing \eqref{mix} and translation invariance in the second line below to write for all $a$ and $L>0$
\begin{align*}
\PMC_x\{Y_1^j-x^j\ge L\}&\ge \E\big[P_0^\w\{X_1^j\le a\}P_x^\w\{X_1^j\ge a+x^j+L\}\big]\\
&\ge P_0\{X_1^j\le a\}P_0\{X_1^j\ge a+L\}-\tfrac{C}{|x|^p}.
\end{align*}
Since $X_1^j$ is not deterministic one can choose $a$ and $L>0$ so that the first term in the second line above is a positive number $2\delta\le1$.
Let $M$ be such that the second term is less than $\delta$ when $x^j\ge M$. 
The upshot is that \eqref{ellipt2} holds for $x^j\ge M$: 
\begin{align}\label{ellipt1}
\PMC_x\{Y_1^j-x^j\ge L\}\ge\delta\ge\tfrac{\delta^2}{4}.
\end{align}
Assume there exists an $x$ such that $0<x^j<M$ and
\[\PMC_x\big\{Y_1^j-x^j\ge \tfrac Ln\big\}< \tfrac{\delta^2}{4n^2},\]
where $n=\lceil M/x_j\rceil\ge1$.
Then, by Chebyshev's inequality
\[\P\Big[\w:P_{0,x}^\w\big\{Y_1^j-x^j\ge \tfrac{L}n\big\}\ge \tfrac{\delta}{2n}\Big]<\frac{\delta}{2n}\,.\]
By shift invariance we have
\begin{align}\label{continue}
\P\Big[\w:P_{ix,(i+1)x}^\w\big\{Y_1^j-x^j\ge \tfrac{L}{n}\big\}\ge \tfrac{\delta}{2n}\text{ for some }i\in[0,n)\Big]<\frac{\delta}2.
\end{align}
Now observe that
\[P_{0,nx}^\w\big\{Y_1^j-nx^j\ge L\big\}\le\sum_{i=0}^{n-1}
P_{ix,(i+1)x}^\w\big\{Y_1^j-x^j\ge \tfrac{L}{n}\big\}.\]
To see this consider independent variables $X_1^{(i)}$ with law $P_{ix}^\w$ and note that 
\[X_1^{(n)}-X_1^{(0)}-nx=\sum_{i=0}^{n-1}(X_1^{(i+1)}-X_1^{(i)}-x).\]
Picking up from \eqref{continue} and letting $y=nx$ we have 
\[\P\Big[\w:P_{0,y}^\w\big\{Y_1^j-y^j\ge L\big\}\ge\tfrac{\delta}{2}\Big]<\frac{\delta}2\]
which in turn implies
\[\PMC_y\big\{Y_1^j-y^j\ge L\big\}<\tfrac{\delta}{2}+\tfrac{\delta}2.\]
Since $y^j\ge M$, this contradicts \eqref{ellipt1}. Part (b) is proved. Next, we prove (c).

Assume \eqref{ellipt3} is false. Then, for each $n\ge1$ there exists $x_n$ such that $|x_n^j|\le1/2^n$ and $\PMC_{x_n}\{|Y_1^j|>4^{-n}\}\le 4^{-n}.$
Since $\PMC_0\{Y_1^j\ne0\}>0$, there exist numbers $a<b$ and $\alpha>0$ such that the set 
	\[A=\Big\{\w:P_0^\w\{X_1^j\le a\}\ge\alpha\text{ and }P_0^\w\{X_1^j\ge b\}\ge\alpha\Big\}\]
has positive $\P$-measure. 
Consider now the sets \[D_n=\Big\{\w:P^\w_{0,x_n}\{|Y_1^j|>4^{-n}\}>2^{-n}\Big\}.\]
By Chebyshev's inequality $\P(D_n)\le 2^{-n}$ and by Borel-Cantelli's lemma $D_n$ occur finitely often, $\P$-almost surely.  But for $\w\in D_n^c\cap A$ we have
\begin{align*}
2^{-n}&\ge P_{0,x_n}^\w\{|X_1^j-\Xtil_1^j|>4^{-n}\}\\
&\ge P_0^\w\{X_1^j\le a\}P_{x_n}^\w\{\Xtil_1^j>a+4^{-n}\}\\
&\ge\alpha P_{x_n}^\w\{\Xtil^j>a+4^{-n}\}.
\end{align*}
The same holds for $P_{x_n}^\w\{\Xtil_1^j\ge b-4^{-n}\}$. 
This implies that, with positive probability, both $P_{x_n}^\w\{\Xtil_1^j\le a+4^{-n}\}$ and $P_{x_n}^\w\{\Xtil_1^j\ge b-4^{-n}\}$ converge to $1$ as $n\to\infty$. 
This is a contradiction, since the two add up to less than one for $n$ large. \eqref{ellipt3} is proved and it still holds if one takes
a smaller $\kappa>0$ to ensure $\kappa<M$. (c) is proved. 

To prove (d) observe that by assumption A4 there exists a $j$ such that $\PMC_0\{Y_1^j\ne0\}>0$. By (b) and (c) the probability that the $Y$ Markov chain exits the cube $[-r,r]^d$ 
in fewer than $1+2rM/(L\kappa)$ steps is at least $\kappa[(\delta^2\kappa^2)/(16M^2)]^{2rM/(L\kappa)}$. The exit time $U_r$ is thus stochastically dominated by 
$1+2rM/(L\kappa)$ times a geometric random variable with mean $[16M^2/(\delta^2\kappa^2)]^{2rM/(L\kappa)}/\kappa$.
\end{proof}

Let $B_r=[-r,r]^d$.

\begin{proposition} 
 \label{a3}
 Let $r_0=r^{\frac{3}{p}}$. There exist constants $0<\alpha_1, A_1<\infty$ such that for large enough $r$
 \[ \inf_{y \in B_r\backslash B_{r_0} }\PMC_y\big\{\text{without entering $B_{r_0}$  chain $Y$ exits $B_r$ by time $A_1r^3$}\big\}\ge \frac{\alpha_1}{r}.\]
 \end{proposition}
  \begin{proof}
 Let us call $E$ the event inside the braces in the statement above and recall that $U_r$ is the exit time of the Markov chain $Y$ from the box $B_r$.  We have
 \begin{align*}
  &\PMC_y(E)= \sum_{1\le k \le A_1r^3}\PMC_y( E, U_r=k)\\
  &= \sum_{1\le k\le A_1r^3}\! \int P_{0,y}^{\w}\Big\{r_0<\vert X_i-\Xtil_i \vert \le r  \text{ for }i<k\text{ and }\vert X_k-\Xtil_k\vert>r\Big\}\,\P(d\w).
     \end{align*}
 
 The right-hand side of the above expression equals
 \[ \sum_{1\le k\le A_1r^3}\int \cdots \int  \one\{\vert x_k-\tilde{x}_k\vert>r\}\pi_{x_{k-1}}^{T^{k-1,0}\w,1}(d x_k)\pi_{\tilde{x}_{k-1}}^{T^{k-1,0}\w,1}(d \tilde{x}_k)\cdots \qquad \qquad\]
 \[   \cdots\Big[\prod_{i=1}^{k-1} \one\{r_0<\vert x_i-\tilde{x}_i\vert\le r\}\pi_{x_{i-1}}^{T^{i-1,0}\w,1}(d x_i)\pi_{\tilde{x}_{i-1}}^{T^{i-1,0}\w,1}(d \tilde{x}_i)\Big]\P(d\w)\] 
Here, we have taken $x_0=0$ and $\tilde{x}_0=y\in B_r\setminus B_{r_0}$. Note that $\pi_{x_{i-1}}^{T^{i-1,0}\w,1}(d x_i)$ is a measure which depends only on the part of the environment $\{\w_{i-1,\cdot}\}$. Because the different environment levels $\w_{i,\cdot}$ are independent  under $\P$ and since we have the spatial mixing condition \eqref{mix}, we have
 \[ \PMC_y(E) \ge \big(1-\tfrac{C}{r_0^p}\big)^{A_1r^3} \Pbar_y(E).\]
By our assumption on $r_0$ we have that $\PMC_y(E)\ge C\Pbar_y(E)$. So now we just need to bound $\Pbar_y(E)$ from below. By assumption A4 the $\Ybar$ walk is nondegenerate along some  direction. Let the $j$-th coordinate $\Ybar^j$ be nondegenerate. Let $\zeta$ denote the time  $\Ybar^j$ exits $(r_0,r]$. Also denote by $\Pbar^j_{y^j}$ the law of the random walk $\Ybar^j$ starting at $y^j$.  Then 
 \begin{equation}
 \label{eq4}
 \Pbar_y(E^c)\le  \Pbar^j_{y^j}\{\Ybar^j_\zeta \le r_0\}+\Pbar^j_{y^j}\{\zeta>A_1r^3\}.
 \end{equation}
  
Time $\zeta$ is bounded above by the exit time $U^j_r$ from interval $[-r,r]$. 
It follows from Theorem 1 of \cite{prui-81} that $\Ebar^j_{y^{j}}[\zeta]\le Cr^2$. Thus the second term on the right-hand side of \eqref{eq4} is bounded by $C/(A_1r)$.

Observe next that since $\Ybar^j$ is symmetric, starting at $0$ it has a chance of $1/2$ to exit $(-1,1)$ into $[1,\infty)$. Say it exits at point $z_1\ge1$. Then, 
it has a chance of $1/2$ to exit $(-1,2z_1+1)$ into $[2z_1,\infty)$. Repeating this scenario, the walk can exit $(-1,r]$ into $(r,\infty)$ in at most $\log(r+1)/\log2$ steps.
This shows that if $\xi(r)$ is the exit time of the walk $\Ybar^j$ from $(-1,r]$, then
	\begin{align*}
	\Pbar_0\{\Ybar_{\xi(r)}>r\}\ge \tfrac 1{r+1}\,.
	\end{align*}

Using a coupling argument one sees that $\Pbar_{y^j}\{\Ybar^j_\zeta>r\}$ increases with $y^j$. Hence, for $r_0+1\le y^j\le r$ we have
	\[\Pbar_{y^j}\big\{\Ybar_\zeta^j>r\big\}\ge\Pbar_{r_0+1}\big\{\Ybar_\zeta^j>r\big\}=\Pbar_0\big\{\Ybar_{\xi(r-r_0-1)}^j>r-r_0-1\big\}\ge\tfrac{1}{r-r_0}\ge\tfrac1r.\]
For $r_0<y^j<r_0+1$ use the Markov property and the fact that there is a positive probability $\delta>0$ that the $\Ybar^j$ walk makes a jump of at least $\nu>0$ to the right to obtain
  \begin{align*}
  & \Pbar^j_{y^j}\{\Ybar^j_\zeta>r\}\\ 
  &\qquad\ge
  \Pbar^j_{y^j}\big\{\Ybar^j \text{ reaches } [r_0+1,r] \text{ in fewer than }\tfrac1{\nu} \text{ steps}\big\}
  \inf_{y\in [r_0+1,r]}\!\!\!\Pbar^j_y\{\Ybar^j_\zeta>r\}\\
   &\qquad\ge \tfrac{\delta^{1/\nu}}{r}.
     \end{align*}

Thus, the first term on the right-hand side of \eqref{eq4} is bounded by $1-\frac{C}{r}$. 
The proposition is proved if we choose $A_1$ large enough.
 \end{proof}  
 
The following consequences follow immediately.
 
 \begin{corollary}
 Fix a constant $c_1>1$ and consider positive integers $r_0$ and $r$ that satisfy 
 	\[\log\log r\le r_0\le c_1\log\log r<r.\]
Then for large enough $r$
	\[\inf_{x\in B_r\setminus B_{r_0}}\PMC_x\big\{\text{without entering $B_{r_0}$ chain $Y$ exits $B_r$ by time $r^4$}\big\}\ge r^{-3}.\]
 \end{corollary}

\begin{proof}
The idea is to apply the previous proposition recursively to go from scale $\log\log r$ to scale $r$.
The proof of Corollary A.1 of \cite{rass-sepp-09} goes through word for word with the choice $\gamma=\frac{p}{3}$.  
\end{proof}

 \begin{lemma}
Let $U_r=\inf\{n\ge0:Y_n\notin B_r\}$ be the first exit time from $B_r=[-r,r]^d$ for the Markov chain $Y$. Then there exists a finite positive constant $C$ such that 
	\[\sup_{x\in B_r} \EMC_x[U_r]\le C r^{13}\text{\quad for all }r>0.\]
 \end{lemma}
 
 \begin{proof}
The proof of Lemma A.4 of  \cite{rass-sepp-09} works in our setting as well since it only uses the above corollary, the fact that the exit times satisfy \eqref{exponential growth},
and  general Markov chain facts.  
 \end{proof}
 
 We now complete the proof of \eqref{eq2}. 
 Let $B=B_r$ with $r=n^{\varepsilon}$. Let $0=V_0^{\rm in}<V_1^{\rm out}<V_1^{\rm in}<V_2^{\rm out}<V_2^{\rm in}<\cdots$ be the successive entrance times $V_i^{\rm in}$ into $B$ and exit times $V_i^{\rm out}$ from $B$ for the Markov chain $Y$. 
%
Write
\begin{align}
\sum_{k=0}^{n-1} \PMC_0\{Y_k \in [-n^{\varepsilon},n^{\varepsilon}]^d\}
&\le\sum_{i=0}^n \EMC_0\big[(V^{\rm out}_{i+1}-V^{\rm in}_i)\one\{V^{\rm in}_i\le n\}\big]\nn\\
&\le\sup_{|y|\le n^\varepsilon} \EMC_y[U_{n^\e}]\cdot \EMC_0\Big[\sum_{i=0}^n\one\{V_i^{\rm in}\le n\}\Big]\nn\\
&\le Cn^{13\varepsilon}\,\EMC_0\Big[\sum_{i=0}^n\one\Big\{\sum_{j=1}^i(V_j^{\rm in}-V_j^{\rm out})\le n\Big\}\Big].\label{visits estimate}
\end{align}

 \begin{lemma} There exists a positive constant $C$ such that the following holds: excursion lengths $\{V_j^{\rm in}-V_j^{\rm out}: 1\le j\le n\}$ stochastically dominate i.i.d.\ random variables $\{\eta_j\}$ such that $1\le\eta_j\le n^{p\varepsilon}$, almost surely, and $P\{\eta_j \ge a\} \ge  Ca^{-1/2}$ for $1\le a\le n^{p\varepsilon}$.
 \end{lemma} 
 \begin{proof}
 Let $V$ denote the first entrance time into $B$. We will show that $\PMC_y\{V\ge a\}$  is bounded below uniformly over $y\notin B$. As in  Proposition \ref{a3}, let us assume that the $\Ybar$ walk is nondegenerate along direction $j$. Assume $y^j>r$, the other case being similar by symmetry. Let $w_r=\inf\{n\ge 1:\Ybar_n^j \le r\}$. Then
 \[ \Pbar_y\{V\ge a\} \ge \Pbar^j_{y^j}\{w_r\ge a\}\]
 and it follows from Theorem 1a of \cite[page 415]{fell-2} 
 that $\Pbar^j_{y^j}\{w_r\ge a\}\ge\frac{C}{\sqrt{a}}$.  By arguments similar to Proposition \ref{a3} (look at $X$ and $\Xtil$ paths up to time $a$), we get that for $y\notin B$,
 \begin{eqnarray*}
 \PMC_y\{V\ge a\} &\ge& \big(1-\tfrac{C}{n^{p\varepsilon}}\big)^a\Pbar_y\{V\ge a\}\\
 &\ge & \big(1-\tfrac{C}{n^{p\varepsilon}}\big)^a \tfrac{C}{\sqrt{a}} \ge\tfrac{C}{\sqrt{a}}
  \end{eqnarray*}
 for $1\le a\le n^{p\varepsilon}$ and some constant $C>0$. This implies the stochastic domination. 
 Assuming that $1\le\eta_j\le n^{p\varepsilon}$ only weakens the conclusion. 
\end{proof}

Let $K_n=\inf\{k:\sum_{j=1}^k\eta_j>n\}$ be the number of renewals up to time $n$. Wald's inequality gives $E[K_n]E[\eta_1]\le n+n^{p\varepsilon}\le2n$,
while the tail of $\eta_1$ gives $E[\eta_1]\ge Cn^{p\varepsilon/2}$. Consequently, $E[K_n]\le Cn^{1-p\varepsilon/2}$.
Picking up from \eqref{visits estimate} we have
\begin{align*}
\sum_{k=0}^{n-1} \PMC_0\{Y_k \in [-n^{\varepsilon},n^{\varepsilon}]^d\}
&\le Cn^{13\varepsilon}\,E\Big[\sum_{i=0}^n\one\Big\{\sum_{j=1}^i\eta_j\le n\Big\}\Big]\\
&\le C n^{13\varepsilon}E[K_n]\le C n^{1+13\varepsilon-p\varepsilon/2}.
\end{align*}
This proves \eqref{eq2} and the proof of Proposition \ref{the proposition} is then complete, with $\eta=\max(1-p\varepsilon,1-(p/2-13)\varepsilon)\le1/2+13/p<1$.
\end{proof}

\begin{proof}[Proof of Theorem \ref{thm}] Since the measure $\P_{\infty}$ we constructed satisfies the assumptions of Theorem \ref{thm2}, the conclusion of Theorem \ref{thm2} holds. But we already know that $\P\big\vert_{\kS_0^+}=\P_{\infty}\big\vert_{\kS_0^+}$ and since the walk always remains in the region $\{(n,x):n\ge 0\}$, the conclusion of Theorem 2 holds with $\P_{\infty}$ replaced by $\P$. Finally, since the diffusion matrix is nonrandom, the averaged invariance principle holds with the
same matrix and thus this matrix is no other than $\mathcal D$.
\end{proof}

\noindent\textbf{Acknowledgement.} We thank Timo Sepp\"al\"ainen for useful discussions.

\bibliographystyle{alea2}
\bibliography{qclt_cs_refs}

\end{document}